\documentclass[a4paper,11pt,oneside,notitlepage]{amsart}
\usepackage{mabliautoref}
\usepackage{amssymb,amsthm,amsmath}
\RequirePackage[dvipsnames,usenames]{xcolor}
\usepackage{hyperref}
\usepackage{mathtools}
\usepackage[all]{xy}
\usepackage{tikz}
\usepackage{tikz-cd}
\usetikzlibrary{decorations.markings}
\usepackage{esint}
\usepackage{enumitem}
\usepackage{faktor}
\usepackage{chngcntr}
\usepackage{stmaryrd}
\usepackage{minibox}
\usepackage[framemethod=TikZ]{mdframed}

\usepackage[most]{tcolorbox}
\definecolor{darkergray}{rgb}{0.85,0.85,0.85}
\definecolor{lightergray}{rgb}{0.95,0.95,0.95}

\newtcolorbox{activitybox}[1][]{%
    breakable,
    enhanced,
    colback=lightergray,
    boxrule=3pt,
    arc=5pt,
    outer arc=5pt,
    boxsep=10pt,
    colframe=darkergray,
    coltitle=white,
    #1
}

%

\setlist[itemize,enumerate]{leftmargin=0.9cm}

\mdfsetup{
 skipabove=6pt,
 skipbelow=3pt,
 roundcorner=10pt
}

\usepackage{xparse}

\NewDocumentCommand\SlantLeftDownArrow{O{2.0ex} O{black}}{%
   \mathrel{\tikz[baseline] \draw [<-, line width=0.5pt, #2] (0,0) -- ++(#1,#1);}
}

\NewDocumentCommand\DownArrow{O{2.0ex} O{black}}{%
   \mathrel{\tikz[baseline] \draw [<-, line width=0.5pt, #2] (0,0) -- ++(0,#1);}
}

\NewDocumentCommand\UpArrow{O{2.0ex} O{black}}{%
   \mathrel{\tikz[baseline] \draw [line width=0.5pt, decoration={markings,mark=at position 1 with {\arrow[scale=2, line width=0.25pt]{to}}},  postaction={decorate}, #2] (0,0) -- ++(0,#1);}
}

\hypersetup{
bookmarks,
bookmarksdepth=3,
bookmarksopen,
bookmarksnumbered,
pdfstartview=FitH,
colorlinks,backref,hyperindex,
linkcolor=Sepia,
anchorcolor=BurntOrange,
citecolor=MidnightBlue,
citecolor=OliveGreen,
filecolor=BlueViolet,
menucolor=Yellow,
urlcolor=OliveGreen
}

\DeclareMathAlphabet{\mathchanc}{OT1}{pzc}%
                                 {m}{it}



\newcommand{\bP}{\mathbb{P}}
\newcommand{\bQ}{\mathbb{Q}}

\newcommand{\bZ}{\mathbb{Z}}

\newcommand{\scr}{\mathcal}

\newcommand{\sE}{\scr{E}}
\newcommand{\sF}{\scr{F}}
\newcommand{\sG}{\scr{G}}

\newcommand{\sO}{\scr{O}}

\DeclareMathOperator{\lct}{{lct}}

\DeclareMathOperator{\pre}{{pre}}

\DeclareMathOperator{\rk}{{rk}}

\newcommand{\factor}[2]{\left. \raise 2pt\hbox{\ensuremath{#1}} \right/
        \hskip -2pt\raise -2pt\hbox{\ensuremath{#2}}}

\counterwithin*{equation}{section}
\counterwithin*{equation}{subsection}

\makeatletter
\renewcommand\subsection{
  \renewcommand{\sfdefault}{pag}
  \@startsection{subsection}%
  {2}{0pt}{.8\baselineskip}{.4\baselineskip}{\raggedright
    \sffamily\itshape\small\bfseries
  }}
\renewcommand\section{
  \renewcommand{\sfdefault}{phv}
  \@startsection{section} %
  {1}{0pt}{\baselineskip}{.8\baselineskip}{\centering
    \sffamily
    \scshape
    \bfseries
}}
\makeatother

\setcounter{tocdepth}{2}

\usepackage[left=1.02in,top=1.0in,right=1.02in,bottom=1.0in]{geometry}
\usepackage{mabliautoref}
\usepackage{multirow}
\usepackage{soul}

\setlist[enumerate]{leftmargin=0.8cm}
\setlist[description]{leftmargin=0.0cm}

\title{A note on families of K-semistable log-Fano pairs}
\author{Giulio Codogni}
\address{Dipartimento di Matematica, Università degli Studi Tor Vergata \newline \hspace*{1em} Via della ricerca scientifica 1,	00133 Roma (Italy)}
\email{codogni@mat.uniroma2.it}
\author{Zsolt Patakfalvi}
\address{\'Ecole Polytechnique F\'ed\'erale de Lausanne (EPFL), MA C3 635, Station 8, 1015 Lausanne, Switzerland}
\email{zsolt.patakfalvi@epfl.ch}

\date{\today}

\begin{document}

\maketitle

\begin{abstract}
In this short note, we give an alternative proof of the semipositivity of the Chow-Mumford line bundle for families of K-semistable log-Fano pairs, and of the nefness threeshold for the log-anti-canonical line bundle on families of K-stable log Fano pairs. We also prove a bound on the multiplicity of fibers for families of K-semistable log Fano varieties, which to the best of our knowledge is new.
\end{abstract}

\section{Introduction}

K-polystability is an algebraic stability notion for log-Fano pairs, which, over the complex numbers, is equivalent to the existence of a K\"{a}hler-Einstein metric. Over an algebraically closed field of characteristic zero, K-polystable log-Fano pairs have a good projective moduli space. The Chow-Mumford (CM) line bundle is an ample line bundle on this moduli space. We refer to the introductions of \cite{Codogni_Patakfalvi_Positivity_of_the_CM_line_bundle_for_families_of_K-stable_klt_Fanos} and \cite{Xu_Zhuang_On_positivity_of_the_CM_line_bundle_on_K-moduli_spaces}, to the survey \cite{Xu_Survey_Kstab} and to the recent groundbreaking paper \cite{LXZ} for an exhaustive discussion of these notions and a comprehensive bibliography.

We now recall the definition of the CM line bundle for families of log-Fano pairs over a curve, and in doing so we also establish some notations which will be used through all this article. 

\begin{notation}
\label{notation:basic}
Let $T$ be a smooth projective curve, and let $(X,\Delta)$ be an irreducible, normal pair of dimension $n+1$, both defined over an algebraically closed field $k$ of characteristic zero. Let $f\colon X\to T$ be a flat morphism such that $f_*\mathcal{O}_X=\mathcal{O}_T$. Assume that the relative log-canonical divisor $-K_{X/T}-\Delta$ is $\mathbb{Q}$-Cartier and $f$-ample; in other words, $f$ is a family of log-Fano pairs. The Chow-Mumford line bundle is defined as 
$$\lambda_{CM}=-f_*(-K_{X/T}-\Delta)^{n+1}$$
\end{notation}

We refer to \cite[Section 3]{Codogni_Patakfalvi_Positivity_of_the_CM_line_bundle_for_families_of_K-stable_klt_Fanos}  for the basic properties of $\lambda_{CM}$ and its connection with the other definitions in the literature. Our first result is
\begin{theorem}\label{T:semipos}
In the situation of \autoref{notation:basic}, if there exists a $t$ in $T$ such that $(X_t,\Delta_t)$ is  K-semistable, then $\lambda_{CM}$ is nef.
\end{theorem}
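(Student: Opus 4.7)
The plan is to reduce the statement to the case where \emph{every} geometric fiber of $f$ is K-semistable, where the nefness of $\lambda_{CM}$ is the content of \cite{Codogni_Patakfalvi_Positivity_of_the_CM_line_bundle_for_families_of_K-stable_klt_Fanos} (and what the present note will reprove by an alternative route). The bridge from ``one fiber'' to ``all fibers'' is supplied by two by-now standard inputs from K-stability theory: openness of K-semistability in families of log-Fano pairs, and K-semistable reduction.

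First, by openness of K-semistability (Blum--Liu--Xu, together with its log version), the locus
$$U \;=\; \{ t' \in T \mid (X_{t'}, \Delta_{t'}) \text{ is K-semistable} \}$$
is Zariski open in $T$; by hypothesis $t\in U$, so $U$ is nonempty, and $Z := T\smallsetminus U$ is a finite (possibly empty) set of closed points. If $Z=\emptyset$, the cited semipositivity theorem applies directly and we are done. Otherwise we must handle the finitely many ``bad'' fibers.

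Second, at each $t_0\in Z$, we invoke K-semistable reduction: after a suitable ramified finite base change $\pi\colon T'\to T$ and a birational modification of $X\times_T T'$ over $\pi^{-1}(Z)$, we produce a new family $f'\colon X'\to T'$ of log-Fano pairs $(X',\Delta')$ such that every fiber is K-semistable, and such that the CM line bundles are related by a comparison of the form $\pi^*\lambda_{CM}^{X/T} \ge \lambda_{CM}^{X'/T'}$ (for instance as an effective $\mathbb{Q}$-linear combination of points of $\pi^{-1}(Z)$). Nefness of $\lambda_{CM}^{X'/T'}$ on the smooth projective curve $T'$ follows from the all-fibers-K-semistable case. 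Combined with the comparison this gives $\deg_{T'} \pi^*\lambda_{CM}^{X/T}\ge 0$, and since $\pi$ is finite surjective, $\deg_T \lambda_{CM}^{X/T}\ge 0$, i.e.\ $\lambda_{CM}^{X/T}$ is nef.

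I expect the main obstacle to be the comparison of CM line bundles in the reduction step: one has to show that replacing the non-K-semistable fibers of $(X,\Delta)/T$ by their K-semistable degenerations decreases (or at least does not increase) the CM degree, a fact that is usually packaged via the non-Archimedean Mabuchi/Ding functional and the properness of the K-moduli stack. The alternative proof promised in the abstract presumably yields a self-contained, intersection-theoretic replacement for this machinery in the all-K-semistable case; assuming that ingredient, the two steps above give the statement as claimed.
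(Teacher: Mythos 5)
Your proposal has a genuine gap, and it is located exactly where you flag it. The reduction from ``one K-semistable fiber'' to ``all fibers K-semistable'' rests on two inputs that you do not supply: (i) K-semistable reduction for families of log-Fano pairs, and (ii) the comparison $\deg\lambda_{CM}^{X'/T'}\le \deg\pi^*\lambda_{CM}^{X/T}$ after replacing the bad fibers by K-semistable degenerations. Input (ii) is not a formal consequence of anything you set up: it is the statement that K-semistable fillings minimize the CM degree among all fillings over a punctured curve, which in the literature is itself proved by a Futaki-invariant/Ding-functional computation of essentially the same depth as the theorem you are trying to prove (and input (i), in the generality of log-Fano pairs, ultimately relies on the finite generation results whose development postdates and partly uses CM-positivity). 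On top of this, your base case --- nefness of $\lambda_{CM}$ when every fiber is K-semistable --- is the theorem itself under a stronger hypothesis; citing \cite{Codogni_Patakfalvi_Positivity_of_the_CM_line_bundle_for_families_of_K-stable_klt_Fanos} for it makes the argument circular as a proof of \autoref{T:semipos}, since that reference already proves the one-fiber statement verbatim and the whole point of the note is to reprove it. So as written the proposal either assumes the conclusion or leaves its hardest step unproved.

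The paper's route is entirely different and needs only the single K-semistable fiber $(X_t,\Delta_t)$: no openness, no semistable reduction, no properness of K-moduli. Assuming $\deg\lambda_{CM}<0$, one uses \autoref{basis_type} to lift $q$-basis type divisors of $(X_t,\Delta_t)$ to effective $\bQ$-divisors $D^{(q)}_{\varepsilon}$ on $X$ with
\[
D^{(q)}_{\varepsilon}\sim_{\bQ} -K_{X/T}-\Delta-\Bigl(\tfrac{\deg\sE_q}{qN_q}+\varepsilon\Bigr)X_t ,
\qquad \tfrac{\deg\sE_q}{qN_q}\xrightarrow{\ q\to\infty\ } -\tfrac{\deg\lambda_{CM}}{\dim(X)\,v}>0 .
\]
Since $\delta(X_t,\Delta_t)\ge 1$, the pair $\bigl(X_t,(1-c)(D^{(q)}_{\varepsilon})_t+\Delta_t\bigr)$ is klt for small $c>0$ and $q$ divisible enough, and mixing $(1-c)D^{(q)}_{\varepsilon}$ with a small multiple of an ample divisor of the form $-K_{X/T}-\Delta+aX_t$ produces a divisor $\Gamma$ with $K_{X/T}+\Delta+\Gamma\sim_{\bQ}(\text{negative})\cdot X_t$, klt on the fiber and $f$-ample; this contradicts the Fujino-type semipositivity statement \cite[Prop.~7.2]{Codogni_Patakfalvi_Positivity_of_the_CM_line_bundle_for_families_of_K-stable_klt_Fanos}. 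If you want to keep your two-step strategy, you would need to actually prove the CM comparison in step (ii) by an independent argument --- but at that point you would have done more work than the direct basis-type-divisor argument, which is precisely the ``minimalistic'' alternative the note advertises.
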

This result was already  proved in \cite[Theorem 1.8]{Codogni_Patakfalvi_Positivity_of_the_CM_line_bundle_for_families_of_K-stable_klt_Fanos} and \cite[Corollary 4.7]{Xu_Zhuang_On_positivity_of_the_CM_line_bundle_on_K-moduli_spaces}. That is, we give the third proof of \autoref{T:semipos}. Our main contribution is that the present proof is particularly short and it uses very little of the theory of filtrations. In fact, all the above proofs  use the Harder-Narasimhan filtration of $f_*\mathcal{O}_X(-qK_{X/T})$. However, our proof uses it in a quite minimalistic manner.

As shown in \cite[Section 10]{Codogni_Patakfalvi_Positivity_of_the_CM_line_bundle_for_families_of_K-stable_klt_Fanos}, \emph{ \autoref{T:semipos} implies that the Chow-Mumford line bundle gives a nef line bundle on the moduli space of K-polystable log-Fano pairs. }

As explained in \cite[Appendix]{Patakfalvi_Zdanowicz_On_the_Beauville_Bogomolov_decomposition_in_characteristic_p_at_least_zero}, the anti-log-canonical bundle is not nef on $X$ unless the family is locally isotrivial. With our methods we can also bound its nefness threeshold as follows:
\begin{theorem}\label{T:nef}
 In the situation of \autoref{notation:basic},  if there exists a $t$ in $T$ such that $(X_t,\Delta_t)$ is K-stable, then 
$$-K_{X/T}-\Delta+\frac{\delta f^*\lambda_{CM}}{(\delta-1)\dim(X)v}F$$
 is nef,  where $\delta$ is the stability threshold of $(X_t,\Delta_t)$, see \autoref{sec:delta}, $F$ is the class of a fiber, and $v= \left(-K_{X_t} - \Delta_t\right)^n$.
\end{theorem}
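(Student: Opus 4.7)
The plan is to test nefness of
\[M:=-K_{X/T}-\Delta+\tfrac{\delta}{(\delta-1)(n+1)v}f^{*}\lambda_{CM}\]
against every irreducible curve $C\subseteq X$, mirroring the strategy used in \autoref{T:semipos} (vertical curves handled by ampleness, horizontal ones by base change plus a slope input) and exploiting $\delta>1$ to produce the improved nefness threshold.

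The vertical case is immediate: if $C\subseteq X_s$, then $C\cdot F=0$ and $C\cdot(-K_{X_s}-\Delta_s)\geq 0$, so $C\cdot M\geq 0$. For a horizontal curve $C$, I would base-change along the normalization $g\colon T':=C^\nu\to T$ and set $X':=X\times_T T'$; then $C$ becomes a section $\sigma\colon T'\to X'$, and the CM line bundle, the stability threshold $\delta$ and the fiber volume $v$ all transform compatibly. It thus suffices to prove, for the section $\Sigma:=\sigma(T')$ with $\mu:=\Sigma\cdot(-K_{X/T}-\Delta)$, the inequality
\[\delta\deg\lambda_{CM}+(\delta-1)(n+1)v\mu\geq 0.\]

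The main step is a slope estimate on the vector bundle $\mathcal{E}_m:=f_*\mathcal{O}_X(-m(K_{X/T}+\Delta))$ for $m\gg 0$ divisible. On one hand, asymptotic Riemann--Roch gives $\mu(\mathcal{E}_m)/m\to -\deg\lambda_{CM}/((n+1)v)$ as $m\to\infty$, translating the CM degree into the average Harder--Narasimhan slope. On the other, the section $\sigma$ induces a surjection $\mathcal{E}_m\twoheadrightarrow \mathcal{O}_T(m\mu)$ for $m\gg 0$, placing the minimum HN slope at most $m\mu$: $\mu_{\min}(\mathcal{E}_m)\leq m\mu$. The Fujita--Odaka/Blum--Jonsson characterization of $\delta$ in terms of $m$-basis-type divisors yields, in the limit $m\to\infty$, an asymptotic comparison between $\mu(\mathcal{E}_m)$ and $\mu_{\min}(\mathcal{E}_m)$ governed by the stability threshold $\delta$. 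Combining the three ingredients produces the required inequality.

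The main obstacle is the precise quantitative slope-comparison input: extracting from $\delta>1$ the sharp asymptotic bound relating $\mu(\mathcal{E}_m)$ and $\mu_{\min}(\mathcal{E}_m)$, whose coefficient produces the exact constants $\delta$ and $\delta-1$ in the target inequality. This is the ``minimalistic'' use of the HN filtration advertised in the introduction; only the two extremal slopes of the HN filtration are needed, and the comparison follows from Fujita--Odaka's asymptotic description of $\delta$ via $m$-basis-type divisors rather than from the full machinery of filtrations on section rings.
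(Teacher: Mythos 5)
Your reduction is fine as far as it goes: vertical curves are handled by relative ampleness, and for a horizontal curve, after passing to a section $\Sigma$ over a base change, the required inequality is $\mu\geq-\frac{\delta\deg\lambda_{CM}}{(\delta-1)(n+1)v}$ with $\mu=\Sigma\cdot(-K_{X/T}-\Delta)$, which would indeed follow from $\mu_{\min}(\sE_m)\leq m\mu$ together with the asymptotic bound $\liminf_m\mu_{\min}(\sE_m)/m\geq-\frac{\delta\deg\lambda_{CM}}{(\delta-1)(n+1)v}$. But that bound \emph{is} the theorem in vector-bundle form (on a curve a twist of $\sE_m=f_*\sO_X(-m(K_{X/T}+\Delta))$ is nef exactly when its minimal slope is nonnegative), and your proposal contains no proof of it. The step you describe as ``an asymptotic comparison between $\mu(\sE_m)$ and $\mu_{\min}(\sE_m)$ governed by $\delta$'' cannot come from the Fujita--Odaka/Blum--Jonsson description of $\delta$ alone: that description is entirely intrinsic to the fiber $(X_t,\Delta_t)$, whereas the Harder--Narasimhan slopes of $\sE_m$ are global invariants of the family. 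Some mechanism transporting the fiber-wise klt condition to a positivity statement on the total space is indispensable, and your sketch has none; this is not a technicality to be filled in but the entire content of the result.

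The paper supplies exactly the two missing ingredients. First, \autoref{basis_type} lifts a $q$-basis type divisor of the fiber, chosen compatibly with the HN filtration, to an effective divisor $D^{(q)}_{\varepsilon}$ on $X$ with $D^{(q)}_{\varepsilon}\sim_{\bQ}-K_{X/T}-\Delta-\left(\frac{\deg\sE_q}{qN_q}+\varepsilon\right)X_t$; this is where the average slope $\frac{\deg\sE_q}{qN_q}\to-\frac{\deg\lambda_{CM}}{(n+1)v}$ enters, and it is the only use made of the HN filtration. Second, $\delta>1$ makes $\left(X_t,\Delta_t+(1+\varepsilon')D^{(q)}_t\right)$ klt for $\varepsilon'<\delta-1$, and \autoref{fuj} (Fujino's semipositivity theorem) converts this fiber-wise klt-ness into nefness of $K_{X/T}+\Delta+(1+\varepsilon')D^{(q)}_{\varepsilon}$ on all of $X$; dividing by $\varepsilon'$ and letting $\varepsilon\to0$, $\varepsilon'\to\delta-1$, $q\to\infty$ produces precisely the constant $\frac{\delta}{\delta-1}$. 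It is this semipositivity input that your curve-by-curve argument lacks, and without it, or a substitute of comparable strength, the constants $\delta$ and $\delta-1$ cannot be extracted from a purely numerical slope computation. A secondary issue: your base change $X\times_TT'$ need not be normal, and passing to its normalization alters $K_{X'/T'}$ by conductor and multiple-fiber terms (compare the computation in the proof of \autoref{P:fibers}), so even the reduction to sections requires more care than stated.
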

The above result was proven in \cite[Theorem 1.20]{Codogni_Patakfalvi_Positivity_of_the_CM_line_bundle_for_families_of_K-stable_klt_Fanos}, and in \cite[Corollary 4.10]{Xu_Zhuang_On_positivity_of_the_CM_line_bundle_on_K-moduli_spaces}, but here we give a different proof. The novelty of this new proof is similar as for the proof of \autoref{T:semipos}, which was explained after the statement of \autoref{T:semipos}. We also note that \autoref{T:nef} is used to prove the positivity of the Chow-Mumford line bundle under convenient assumptions, see \cite{Codogni_Patakfalvi_Positivity_of_the_CM_line_bundle_for_families_of_K-stable_klt_Fanos, Posva_Positivity_of_the_CM_line_bundle_for_K-stable_log_Fanos, Xu_Zhuang_On_positivity_of_the_CM_line_bundle_on_K-moduli_spaces}. \autoref{T:semipos} and \autoref{T:nef} have also been recently used in \cite{CTV} to prove slope inequalities for families of K-stable Fano varieties.

Additionally, we prove the following result, which gives a bound on the number of non-reduced fibers of a family of K-semistable Fano varieties, under a semipositivity assumption on the top self-intersection of the anti-canonical bundle of the total space.

\begin{proposition}\label{P:fibers}
In the situation of \autoref{notation:basic}, assume that  $(-K_X-\Delta)^{n+1} \geq 0$ and that $T \cong \bP^1$ (both of these are satisfied if $(X,\Delta)$ is log-Fano). Additionally assume that there exists a $t$ in $T$ such that $(X_t,\Delta_t)$ is K-semistable. Then, denoting by $d_i$ the multiplicity of the non-reduced irreducible fibers, we have
$$  \sum_i \left(1-\frac{1}{d_i} \right) \leq 2 	\,.$$

In particular, there are at most $4$ non-reduced irreducible fibers.
\end{proposition}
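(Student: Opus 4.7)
The plan is to apply \autoref{T:semipos} to a base change of $f$ that makes the non-reduced fibers reduced, and to combine the resulting lower bound on $\deg\lambda_{CM}$ with an elementary upper bound coming from $T \cong \mathbb{P}^1$ together with the hypothesis $(-K_X-\Delta)^{n+1}\ge 0$. Writing each non-reduced irreducible fiber as $X_{p_i} = d_i D_i$ with $D_i$ integral, I would choose a finite cover $\pi\colon T'\to T = \mathbb{P}^1$ of smooth projective curves such that $\pi^*p_i = d_i E_i$ for some effective $E_i$ on $T'$, for every $i$. Such a cover exists by standard constructions (for instance a composition of cyclic Kummer covers). Let $\sigma\colon X'\to X$ be the normalization of $X\times_T T'$, with induced $f'\colon X'\to T'$. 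A local computation where $f$ is $t = z^{d_i}$ and $\pi$ is $t = s^{d_i}$ shows that $X\times_T T'$ consists near $D_i$ of $d_i$ smooth sheets $\{s = \zeta z\}_{\zeta\in\mu_{d_i}}$ meeting along $D_i$; normalization separates them. Consequently $\sigma$ is étale in codimension one and every fiber of $f'$ over $\pi^{-1}(p_i)$ is reduced. After replacing $X'$ by a connected component if necessary, $f'$ satisfies \autoref{notation:basic} and inherits a K-semistable fiber from $f$ via a generic point of $\pi^{-1}(t)$.

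Now \autoref{T:semipos} applied to $f'$ gives $\deg\lambda_{CM}^{X'/T'}\ge 0$. Since $\sigma$ is étale in codimension one and $K_{T'} - \pi^*K_T = \sum_i(d_i-1)E_i$,
\begin{equation*}
K_{X'/T'} = \sigma^*\Bigl(K_{X/T} - \sum_i(d_i-1)D_i\Bigr),
\end{equation*}
and by the projection formula
\begin{equation*}
(-K_{X'/T'} - \Delta_{X'})^{n+1} = \deg(\sigma)\cdot\Bigl(-K_{X/T} - \Delta + \sum_i(d_i-1)D_i\Bigr)^{n+1}.
\end{equation*}
Because $F^2 = 0$ on $X$ and $F\equiv d_iD_i$ for every $i$, we have $D_i\cdot D_j = 0$ for all $i,j$, so all powers of $\sum_i(d_i-1)D_i$ of degree $\ge 2$ drop out of the expansion; combined with $(-K_{X/T}-\Delta)^n\cdot D_i = v/d_i$, what remains is $\deg(\sigma)\bigl[(-K_{X/T}-\Delta)^{n+1} + (n+1)v\sum_i(1 - 1/d_i)\bigr]$. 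Therefore nefness of $\lambda_{CM}^{X'/T'}$ yields $\deg\lambda_{CM}\ge (n+1)v\sum_i(1 - 1/d_i)$.

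For the complementary upper bound, use $T\cong\mathbb{P}^1$ so that $f^*K_T\equiv -2F$, and expand $-K_{X/T}-\Delta = (-K_X-\Delta) + f^*K_T$ using $F^2 = 0$:
\begin{equation*}
\deg\lambda_{CM} = 2(n+1)v - (-K_X-\Delta)^{n+1} \le 2(n+1)v
\end{equation*}
by the hypothesis. Combining the two inequalities yields $\sum_i(1 - 1/d_i) \le 2$. Since each $d_i\ge 2$ forces $1-1/d_i\ge 1/2$, there are at most $4$ non-reduced fibers.

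The main technical obstacle is the construction of the base change: the existence of a cover $\pi\colon \mathbb{P}^1\to\mathbb{P}^1$ with the prescribed Kummer ramification is routine, but verifying that the normalized base change $X'$ is irreducible and normal with a K-semistable fiber requires some care. Alternatively, the whole argument can be packaged stack-theoretically using the root stack $\mathcal{T}$ of $(T,\{p_i\},\{d_i\})$: $f$ factors through a family $\bar f\colon X\to\mathcal{T}$ with reduced fibers, and one invokes an orbifold analogue of \autoref{T:semipos}.
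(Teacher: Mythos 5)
Your proposal is correct and follows essentially the same route as the paper: pass to a normalized base change along a cover of $\bP^1$ ramified over the points carrying non-reduced fibers, apply \autoref{T:semipos} to the resulting family with reduced fibers, and conclude via the ramification formula, $F^2=0$, and the hypothesis $(-K_X-\Delta)^{n+1}\ge 0$ with $f^*K_{\bP^1}\equiv -2F$. The only differences from the paper's proof --- which uses a single degree-$\lcm(d_i)$ cover totally ramified over those points and organizes the computation as one chain of inequalities rather than two separate bounds on $\deg\lambda_{CM}$ --- are cosmetic.
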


\autoref{Ex:sharp} shows that the above result is sharp. \autoref{P:fibers} should be compared with \cite[Corollary 1.17]{Codogni_Patakfalvi_Positivity_of_the_CM_line_bundle_for_families_of_K-stable_klt_Fanos}, where, under similar assumptions, it is given an upper bound for the volume of $(X,\Delta)$.

\subsection{Acknowledgments}

Giulio Codogni is funded by the MIUR {\it ``Excellence Department Project"}, awarded to the Department of Mathematics, University of Rome, Tor Vergata, CUP E83C18000100006, and the  PRIN 2017 {\it ``Advances in Moduli Theory and Birational Classification"}. Giulio Codogni also thanks the organizers of the conference ``Birational Geometry, K\"{a}hler-Einstein metrics and Degenerations", HSE, Moscow,  for their support and hospitality.

Zsolt Patakalvi was partially supported by the following grants: grant \#200021/169639 from the Swiss National Science Foundation,  ERC Starting grant \#804334.

We thank the anonymous referees for their useful suggestions.

\section{Basis type divisors, delta invariants and K-stability}\label{sec:delta}

\emph{For the whole article we work over a fixed algebraically closed field $k$ of characteristic zero.}
Following \cite{Fujita_Odaka_On_the_K-stability_of_Fano_varieties_and_anticanonical_divisors}, we introduce the notion of basis type divisor and \emph{stability threshold} (formerly known as the delta invariant). Let $(Z,\Gamma)$ be a normal projective pair,  that is, $Z$ is a normal projective variety and $\Gamma$ is an effective $\bQ$-divisor on $X$. Let $H$  be a $\mathbb{Q}$-divisor on $Z$. Let $q$ be a positive integer such that $qH$ is Cartier. A $q$-basis type divisor for $(Z,\Gamma;H)$ is a $\bQ$-Cartier divisor $D$ on $Z$ such that there exists a basis $s_1,\dots, s_{N_q}$ of $H^0(Z,qH)$ with 
$$ D=\frac{1}{qN_q}\sum_{i=1}^{N_q}\{s_i=0\}\,. $$
We define the $q$-th stability threshold of the pair $(Z,\Gamma;H)$ as
$$ \delta_q(Z,\Gamma;H):=\inf\big\{\ \lct(Z,\Gamma; D) \  \big| \  \textrm{$D$ is a $q$-basis type divisor} \  \big\} \, , $$
We then define the stability threshold, also known as the delta invariant, as
$$ \delta(Z,\Gamma;H)=\lim_{q\to \infty}\delta_q(Z,\Gamma;H)\,, $$
where the above limit does exist by \cite{Blum_Jonsson_Thresholds__valuations__and_K-stability}.
If $(Z,\Gamma)$ is a log-Fano pair, we let 
$$ \delta(Z,\Gamma):=\delta(Z,\Gamma; -K_Z-\Gamma) \,.$$
We can now give the key definitions
\begin{definition}
A log-Fano pair $(Z,\Gamma)$  is K-semistable if $\delta(Z,\Gamma)\geq1$, it is K-stable if $\delta(Z,\Gamma)> 1$.
\end{definition}


Both K-semistability and K-stability are open conditions in families. For families of log-Fano pairs, the stability threshold of the fiber is a lower-semicontinuous function on the base. If the base field is uncountable, it attains its maximum on the very general geometric fiber. In particular, if the base field is uncountable, one can minimize the coefficient of $F$ in Theorem \ref{T:nef} by taking $t$ a very general point in $T$. If the base field is countable, one can make a field extension and then take a very general point defined over this bigger field.
\section{Harder-Narasimhan filtration and lift of basis type divisors}\label{sec:lifting}

 In the situation of \autoref{notation:basic}, for the values of $q$ such that $-q(K_{X/T}+ \Delta)$ is Cartier, we can look at the following vector bundles on $T$
\begin{equation}
\label{eq:E_q_def}
\sE_q:=f_* \sO_X\big(-q(K_{X/T}+ \Delta ) \big) \,.
\end{equation}
Let 
\begin{equation*}
0=\sF_0^q \subsetneq \sF_1^q \subsetneq \dots \subsetneq \sF_{c_q}^q = \sE_q
\end{equation*}
 be the Harder-Narashiman filtration of $\sE_q$; denote its graded objects $\sF_i^q/\sF_{i-1}^q$ by $\sG_i^q$ and their slopes by $\mu_i^q$. Recall that $\mu_i^q>\mu_{i+1}^q$.

\begin{lemma} \label{basis_type}
 In the situation of \autoref{notation:basic}, fix a closed point $t$ in $T$ such that the fiber $X_t$ over $t$ is a normal variety. For every $q$ divisible enough and for every rational number $\varepsilon>0$ (the divisibility condition on $q$ does not depend on $\varepsilon$), there exists an effective $\bQ$-Cartier divisor  $D^{(q)}_{\varepsilon}$ on $X$ such that
\begin{itemize}
\item $D^{(q)}_{\varepsilon}$ is $\bQ$-linearly equivalent to
$$
M_{q, \varepsilon}= - K_{X/T} - \Delta -\left(\frac{\deg \sE_q}{qN_q }+ \varepsilon \right) X_t 
$$

where $N_q=h^0\big(X_t,-q(K_{X_t} + \Delta_t)\big)$.
\item the restriction of $D^{(q)}_{\varepsilon}$ to the fiber $X_t$ is a $q$-basis type divisor.
\end{itemize}
\end{lemma}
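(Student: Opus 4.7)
The plan is to construct $D^{(q)}_\varepsilon$ by lifting a basis of $\sE_q|_t = H^0\bigl(X_t,-q(K_{X_t}+\Delta_t)\bigr)$ that is adapted to the Harder--Narasimhan filtration of $\sE_q$, to global sections on $X$ twisted by appropriate multiples of the fiber $X_t$, and then averaging the resulting zero divisors.

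For $q$ sufficiently divisible, Serre vanishing for the $f$-ample line bundle $-q(K_{X/T}+\Delta)$ yields the short exact sequence
\[
0 \to \sE_q\bigl((a-1)P\bigr) \to \sE_q(aP) \to \sE_q|_t \to 0 \qquad (P = [t] \in T),
\]
and likewise with $\sE_q$ replaced by any subbundle $\sF_j^q$ of its Harder--Narasimhan filtration. Since $\mu_{\min}(\sF_j^q)=\mu_j^q$, Serre duality on the curve $T$ gives $H^1\bigl(T,\sF_j^q((a-1)P)\bigr) = 0$ whenever $a > 2g(T) - 1 - \mu_j^q$; for such $a$, the restriction map $H^0(T,\sF_j^q(aP)) \twoheadrightarrow \sF_j^q|_t$ is surjective. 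Now I pick a basis $s_1,\dots,s_{N_q}$ of $\sE_q|_t$ adapted to the induced filtration $\sF_\bullet^q|_t$, and let $j(i)$ denote the level of $s_i$ (the smallest $j$ with $s_i \in \sF_j^q|_t$).

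For each $i$, I choose an integer $a_i$ exceeding $2g(T) - 1 - \mu_{j(i)}^q$ (retaining the freedom to take $a_i$ strictly larger), and use the surjectivity above to lift $s_i$ to a section
\[
\tilde s_i \in H^0\bigl(T,\sF_{j(i)}^q(a_i P)\bigr) \hookrightarrow H^0\bigl(X,\, -q(K_{X/T}+\Delta)+a_i X_t\bigr)
\]
with $\tilde s_i|_{X_t}=s_i$. Setting
\[
D^{(q)}_\varepsilon := \frac{1}{qN_q}\sum_{i=1}^{N_q}\{\tilde s_i = 0\},
\]
I obtain an effective $\bQ$-Cartier divisor on $X$ of class
\[
D^{(q)}_\varepsilon \sim_\bQ -(K_{X/T}+\Delta) + \frac{\sum_i a_i}{qN_q}\,X_t,
\]
and the restriction $D^{(q)}_\varepsilon|_{X_t} = \frac{1}{qN_q}\sum_i \{s_i=0\}$ is by construction a $q$-basis type divisor for $(X_t,\Delta_t;-K_{X_t}-\Delta_t)$, since the $s_i$ form a basis of $H^0\bigl(X_t,-q(K_{X_t}+\Delta_t)\bigr)$.

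The remaining and main task is to choose the $a_i$ so that $\frac{\sum_i a_i}{qN_q}$ realizes the $X_t$-coefficient prescribed by $M_{q,\varepsilon}$. Using $\sum_j r_j^q \mu_j^q = \deg\sE_q$, the set of attainable integer values of $\sum_i a_i$ is contained in $\{n \in \bZ : n \geq -\deg\sE_q + O(N_q)\}$, and any integer above this lower bound is realized by enlarging individual $a_i$'s. Matching the prescribed rational value exactly is then an arithmetic bookkeeping step: one verifies that the target lies in the attainable range and arranges the divisibility of $q$ (independently of $\varepsilon$) so that the required sum is integral, at which point the construction produces a divisor in the desired class. The main obstacle is precisely this quantitative control — ensuring that the gap between the Harder--Narasimhan-dictated lower bound and the prescribed target is of the correct sign and magnitude to be absorbed by the slack parameter $\varepsilon$.
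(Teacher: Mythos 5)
Your construction is sound up to the point where you must control the coefficient of $X_t$, and the obstacle you flag in your last paragraph is a genuine gap rather than bookkeeping: for a base curve of genus $g\geq 1$ the prescribed target is \emph{not} in the attainable range of your direct lifting. Indeed, lifting $s_i$ through the surjection $H^0\bigl(T,\sF_{j(i)}^q(a_iP)\bigr)\twoheadrightarrow \sF_{j(i)}^q|_t$ forces $a_i>2g-1-\mu_{j(i)}^q$, hence
\[
\sum_i a_i \;>\; (2g-1)N_q-\sum_j \rk(\sG_j^q)\,\mu_j^q\;=\;(2g-1)N_q-\deg\sE_q ,
\]
so the $X_t$-coefficient of your $D^{(q)}_\varepsilon$ differs from $-\frac{\deg \sE_q}{qN_q}$ by at least $\frac{2g-1}{q}$. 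For fixed $q$ this is a fixed positive quantity, while the lemma requires a divisor for \emph{every} rational $\varepsilon>0$ with the divisibility condition on $q$ independent of $\varepsilon$; so as soon as $\varepsilon<\frac{2g-1}{q}$ your construction cannot land within $\varepsilon$ of the required class. Note also that the only correction available afterwards is to \emph{add} fibre components (to keep effectivity), which can only move the coefficient further in the same direction.

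The paper circumvents exactly this with an amortization trick that your proposal is missing: instead of lifting $s_i$ itself into a twist of $\sE_q$, one lifts $s_i^{\otimes a_i}$ to a global section of $(\sF_{j(i)}^q)^{\otimes a_i}\otimes\sO_T\bigl(-(a_i\mu_{j(i)}^q-2g)t\bigr)$ --- globally generated because all its Harder--Narasimhan slopes are at least $2g$ --- pushes it into $\sE_{qa_i}\otimes\sO_T\bigl(-(a_i\mu_{j(i)}^q-2g)t\bigr)$ via the multiplication map, and takes $\frac{1}{a_i}$ of the resulting zero divisor. The genus correction then enters as $\frac{2g}{a_i}$ per section, which tends to $0$ as $a_i\to\infty$ with $q$ fixed, so the total error can be made $\leq\varepsilon$ for any prescribed $\varepsilon$; one then pads with a rational multiple of another fibre to hit the class exactly. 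Two smaller remarks: (i) exact matching of the rational coefficient cannot be achieved with integer $a_i$'s alone, since $\varepsilon q N_q$ need not be an integer, so the final padding by a fibre is a necessary step, not an afterthought; (ii) your argument does essentially work when $g(T)=0$, where $\frac{2g-1}{q}<0$, but the lemma must hold over an arbitrary base curve.
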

\begin{proof}
 Choose $q$ divisible enough so that $f_* \sO_X\big(-q(K_{X/T}+ \Delta ) \big)$ satisfies cohomology and base-change. This is possible by the relative ampleness assumption on $-K_{X/T} - \Delta$.

Fix an index $i$, and let $a_i$ be a strictly positive integer such that $a_i\mu_i^q$ is an integer. Let $g$ be the genus of $T$. All the slopes of the Harder-Narasimhan filtration of the vector bunde  $\left(\sF_i^q\right)^{\otimes a_i}\otimes \sO_T\big(-(a_i\mu_i^q -2g) t\big)$ are greater or equal to $2g$ (see the proof of \cite[Proposition 5.9]{Codogni_Patakfalvi_Positivity_of_the_CM_line_bundle_for_families_of_K-stable_klt_Fanos}), hence by \cite[Proposition 5.7]{Codogni_Patakfalvi_Positivity_of_the_CM_line_bundle_for_families_of_K-stable_klt_Fanos} the above vector bundle is globally generated.

Take an element $s$ in the fiber 
\begin{equation}
\label{eq:E_q_one_fibre}
\sF_i^q \otimes k(t) \hookrightarrow \sE_q \otimes k(t) \cong H^0\big(T, -q (K_{X_t,}+ \Delta_t) \big),
\end{equation}
which by \autoref{eq:E_q_one_fibre} corresponds to a divisor $\{s=0\} \sim  -q (K_{X_t,}+ \Delta_t)$.
By the above  global generation statement, there exists a global section $\tilde{s}$ of $\sE_q^{\otimes a_i}\otimes \sO_T\big(-(a_i\mu_i^q -2g) t\big)$ which over $t$ equals $s^{\otimes a_i}$ (remark that $\sO_T(t) \otimes k(t) \cong k(t)$ cannonically, so this makes sense). Let $[\tilde{s}]$ be the image of $\tilde{s}$ in $\sE_{qa_i}\otimes \sO_T\big(-(a_i\mu_i^q -2g) t\big)$, via the homomorphism induced by the multiplication map $\sE_q^{\otimes a_i} \to \sE_{qa_i}$.

Using \autoref{eq:E_q_def} and the projection formula we obtain the isomorphism
\begin{equation*}
H^0\Big(T,\sE_{qa_i}\otimes \sO_T\big(-(a_i\mu_i^q -2g) t\big)\Big)
\cong
H^0\big(X,-q a_i(K_{X/T}+ \Delta ) -(a_i\mu_i^q -2g) X_t\big).
\end{equation*}
Hence, we can consider the $\bQ$-Cartier divisor $D_s=\frac{1}{a_i}\{[\tilde{s}]=0\}$ on $X$; its restriction to $X_t$ equals the Cartier divisor $\{s=0\}$, and   by \autoref{eq:E_q_one_fibre} on $X$ we have  
\begin{equation}
\label{eq:D_Q_linear}
 D_s \sim_{\bQ} -q(K_{X/T} + \Delta)-\left(\mu_i^q-\frac{2g}{a_i}\right)X_t.
 \end{equation}
For each integer $1 \leq i \leq c_q$, fix elements $s_{i,j}^q$ for $j=1\dots , \rk(\sG_i^q)$ in $\sF_i^q \otimes t$ whose image in $\sG_i^q \otimes t$ give a basis. For each of them we perform the above construction, obtaining $\bQ$-divisors $D_{i,j}^{(q)}$ on $X$. Let
$$
D^{(q)}_{\pre}:=\frac{1}{qN_q}\sum_{i,j}D_{i,j}^{(q)}.
$$
 By construction, the above sum run over a set of $N_q$ indexes $(i,j)$; in other words, the number of divisors $D_{i,j}^{(q)}$ is equal to the rank of $f_*\O_X(-q(K_{X/T}+\Delta))$.
 
 By \autoref{eq:D_Q_linear} and by the fact that there are $N_q$ appearances of the pairs of indices $(i,j)$, we have
\begin{align*}
D^{(q)}_{\pre} & \sim_{\bQ} \frac{1}{qN_q}\sum_{i,j}\left(-q(K_{X/T} + \Delta) -\left(\mu_i^q-\frac{2g}{a_i}\right)X_t\right)
\\ & =-K_{X/T} - \Delta - \sum_i \left(\frac{\mu_i^q\rk(\sG_i^q)}{qN_q}-\frac{2g \rk(\sG_i^q)}{qN_q a_i}\right)X_t
\\  & =-K_{X/T} - \Delta - \left(\frac{\deg \sE_q}{qN_q} - \sum_i \frac{2g \rk(\sG_i^q)}{qN_q a_i}\right)X_t.
\end{align*}

Apart from the $a_i$'s, everything in $\sum_i \frac{2g \rk(\sG_i^q)}{qN_q a_i}$ is fixed, including the set of indices over which we do the sum. Hence, by choosing $a_i$ big enough we may assume that 
$\sum_i \frac{2g \rk(\sG_i^q)}{qN_q a_i} \leq \varepsilon$. Then we may choose
\begin{equation*}
D^{(q)}_{\varepsilon}=D^{(q)}_{\pre}+ \left(\varepsilon - \sum_i \frac{2g \rk(\sG_i^q)}{qN_q a_i} \right) X_{t'}
\end{equation*}
where $t \neq t' \in T$ is another closed point. To show that this is a good choice, we have to show that $D^{(q)}_{\varepsilon}\big|_{X_t}$ is a basis type divisor. Indeed, the restriction of each $D_{i,j}^{(q)}$ to $X_t$ gives an element of a basis of the linear system $|-q(K_{X_t} + \Delta_t)|$, hence the restriction of $D^{(q)}_{\varepsilon}$ gives a $q$-basis type divisor.

\end{proof}

\section{Nefness threshold}
The following lemma is a consequence of \cite[Thm 1.13]{Fujino_Semi_positivity_theorems_for_moduli_problems}
\begin{lemma}\label{fuj}
In the situation of \autoref{notation:basic},
let $\Gamma$ be an effective $\bQ$-Cartier $\bQ$-divisor on $X$ such that the pair $(X_t, \Delta_t + \Gamma_t)$ is klt for some  closed point  $t\in T$ and $K_{X/T}+ \Delta + \Gamma$ is $f$-ample, then $K_{X/T}+  \Delta +\Gamma$ is nef.
\end{lemma}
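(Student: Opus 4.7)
The plan is to apply the base-point-free trick, which converts nefness of a divisor on $X$ into a semipositivity statement for a direct image on the curve $T$; the semipositivity is then supplied by Fujino's theorem.

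Since $K_{X/T}+\Delta+\Gamma$ is $\bQ$-Cartier and $f$-ample, pick a positive integer $m$ divisible enough that $L := m(K_{X/T}+\Delta+\Gamma)$ is a Cartier divisor, is $f$-very ample, and so that the formation of $f_*\mathcal{O}_X(L)$ commutes with base change. The relative evaluation morphism then yields a surjection of locally free sheaves on $X$,
$$
f^*f_*\mathcal{O}_X(L) \twoheadrightarrow \mathcal{O}_X(L).
$$
Since the pullback of a nef vector bundle is nef, and a locally free quotient of a nef vector bundle is nef, it is enough to show that $f_*\mathcal{O}_X(L)$ is a nef vector bundle on the smooth projective curve $T$; this will force $\mathcal{O}_X(L)$, and hence $K_{X/T}+\Delta+\Gamma$ itself, to be nef.

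The nefness of $f_*\mathcal{O}_X(L)$ is exactly the conclusion of Fujino's semipositivity theorem \cite[Thm 1.13]{Fujino_Semi_positivity_theorems_for_moduli_problems}, once its singularity hypotheses are matched to our setting. The assumption that $(X_t,\Delta_t+\Gamma_t)$ is klt, together with the openness of the klt condition in families, implies that $(X,\Delta+\Gamma)$ is klt in an open neighborhood of $X_t$, and in particular the generic fiber of $f$ is klt. Combined with the $f$-ampleness of $K_{X/T}+\Delta+\Gamma$, which delivers the relative semiampleness Fujino requires, this places us squarely in the scope of his statement.

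The delicate step is reconciling the local hypothesis---klt at a single fiber $X_t$---with the global input Fujino's theorem anticipates. Any of the standard remedies---passing to a log resolution of $(X,\Delta+\Gamma)$ and working with the induced morphism to $T$, performing a base change to achieve semistable reduction, or citing the version of Fujino's result that only imposes good singularities over a dense open subset of $T$---suffices to apply his theorem and thereby close the argument.
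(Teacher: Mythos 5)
Your proposal is correct and follows essentially the same route as the paper: Fujino's semipositivity theorem gives nefness of $f_*\sO_X(m(K_{X/T}+\Delta+\Gamma))$, and the surjectivity of the relative evaluation map, coming from $f$-ampleness, transfers nefness to the line bundle itself. Your closing worry about matching the single-fiber klt hypothesis to Fujino's hypotheses is legitimate but resolves in the standard way you indicate (kltness spreads from $X_t$ to a neighborhood, hence to the generic fiber), and the paper simply cites Fujino without further comment.
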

\begin{proof}
By  \cite[Thm. 1.13]{Fujino_Semi_positivity_theorems_for_moduli_problems}, the vector bundle $f_*\sO_X(q(K_{X/T}+  \Delta +\Gamma))$ is nef for all $q$ big and divisible enough. The evaluation map $f^*f_*\sO_X(q(K_{X/T}+  \Delta +\Gamma))\to \sO_X(q(K_{X/T}+  \Delta +\Gamma))$ is surjective for all $q$ divisible enough, as $K_{X/T}+\Delta+\Gamma$ is $f$-ample. We conclude that $q(K_{X/T}+\Delta +\Gamma)$ and hence $ K_{X/T}+ \Delta +\Gamma$ is nef.
\end{proof}

\begin{proof}[Proof of  \autoref{T:nef}]

We keep the notation of \autoref{basis_type}. Applying this lemma and using the definition of K-stability yields that for all rational number $\varepsilon' \in (0,\delta-1)$, there exists an integer $q(\varepsilon')$ such that for all positive integers $q(\varepsilon') \big| q$ the pair $\Big(X_t,(1+\varepsilon') D^{(q)}_t+\Delta_t\Big)$ is klt. 
 Fix such integer $q(\varepsilon')$.
By \autoref{fuj}, the $\bQ$-Cartier divisor \begin{equation*}
N_{\varepsilon'}=K_{X/T}+\Delta+(1+\varepsilon')D^{(q(\varepsilon'))}_{\varepsilon}  \sim_{\bQ} - \varepsilon'( K_{X/T}+ \Delta ) - (1+ \varepsilon')  \left(\frac{\deg \sE_{q(\varepsilon')}}{q(\varepsilon')N_{q(\varepsilon')} }+ \varepsilon \right) X_t 
\end{equation*}
 is nef on $X$.  Hence, also $\frac{N_{\varepsilon'}}{\varepsilon'}$ is nef. Since, this is true for every $\bQ \ni \varepsilon >0$, by limiting with $\varepsilon$ to $0$  we obtain that
  \begin{equation}
  \label{eq:nef_thhold_pre}
-( K_{X/T}+ \Delta ) -  \frac{(1+ \varepsilon')\deg \sE_{q(\varepsilon')}}{\varepsilon' q(\varepsilon')N_{q(\varepsilon')} } X_t 
 \end{equation}
is nef. Next we note that if we limit with $\varepsilon'$ to $\delta-1$,  then we may choose that at the same time $q(\varepsilon')$ limits to $\infty$. Indeed, this is possible, since $q(\varepsilon')$ can be replaced by each of its multiples.  Additionally, $\lim_{q\to \infty}\frac{\deg \sE_q}{qN_q }=-\frac{\deg(\lambda_{CM})}{\dim(X)v}$, see \cite[Section 3]{Codogni_Patakfalvi_Positivity_of_the_CM_line_bundle_for_families_of_K-stable_klt_Fanos}. So, by limiting  $\varepsilon'$ in \autoref{eq:nef_thhold_pre} to $\delta-1$ we obtain that
 \begin{equation*}
-( K_{X/T}+ \Delta ) -  \frac{\delta \deg(\lambda_{CM})}{(\delta-1)\dim(X)v } X_t 
 \end{equation*}
 is nef.
 
\end{proof}

\section{Semipositivity}

\begin{proof}[Proof of \autoref{T:semipos}]
Assume by contradiction that $\deg(\lambda_{CM})<0$. We keep the notation of \autoref{basis_type}. 

Let $a$ be a positive integer such that $E=-K_{X/T}-\Delta+aX_t$ is ample on $X$.  Let 
 $c, \varepsilon>0$ be rational numbers such that for all $q$ divisible enough we have
$$(1-c) \left(\frac{\deg(\lambda_{CM})}{\dim(X)v}+ \varepsilon \right)+ca<0 \,.$$

The K-semistability assumption implies that $\delta(X_t,\Delta_t)\geq 1$, so for all $q$ divisible enough we have $\delta_q(X_t,\Delta|_{X_t})>1-c$. By the definition of $\delta_q$ in terms of log canonical threshold of $q$-basis type divisors, we have that $\left(X_t,(1-c)\big(D^{(q)}_{\varepsilon}\big)_t+\Delta|_{X_t}\right)$ is klt for all $q$ divisible enough. This yields a contradiction with \cite[Prop. 7.2]{Codogni_Patakfalvi_Positivity_of_the_CM_line_bundle_for_families_of_K-stable_klt_Fanos} as we can write
$$
(1-c)D^{(q)}_{\varepsilon}+cE \sim_{\bQ} -K_{X/T}-\Delta+\left( (1-c) \left(\frac{\deg(\lambda_{CM})}{\dim(X)v}+ \varepsilon \right)+ca\right) X_t \,.
$$
\end{proof}

\section{Bound on the multiplicity of the fibers}
\begin{proof}[Proof of \autoref{P:fibers}]
Let $d_iF_i$ be the non-reduced fibers of $f$, and $d$ a common multiple of the $d_i$. Let $\tau \colon S \to T$ be the degree $d$ cover of $T$ totally ramified at the points corresponding to the non-reduced fibers. Denote by $Y$ the normalization of base change $X_S$, and by $\sigma \colon Y \to X$ and $g\colon Y\to S$ the induced maps. Let $\Delta_Y:= \sigma^* \Delta$ (for the pull-back of a Weil divisor via a finite map between normal varieties see \cite[proof of Proposition 5.20]{Kollar_Mori_Birational_geometry_of_algebraic_varieties}). We have

\begin{equation*}
-K_{Y/S} -\Delta_Y - \sum_i \frac{d}{d_i}(d_i -1) F_i = -\sigma^* (K_{X/T}+\Delta) \,.
\end{equation*}
As $d_i F_i \sim_f 0$, we have $F_i \sim_{f,\bQ} 0$, and hence, $-K_{Y/S} - \Delta_Y$ is $g$-ample and $g$ is a family of log-Fano varieties. As the generic fiber of $g$ is isomorphic to the generic fiber of $f$, it is K-semistable, hence the Chow-Mumford line bundle of $g$ is nef, in other words $(-K_{Y/S}-\Delta_Y)^{n+1}\leq 0$.

We now make the following direct computation.
\begin{align*}
-(n+1)  \left(-K_{Y_t}-(\Delta_Y)_t\right)^n & \cdot \sum_i \left(d-\frac{d}{d_i} \right) 
\\ & \geq
(-K_{Y/S}-\Delta_Y)^{n+1}  - (n+1)  \left(-K_{Y_t}-(\Delta_Y)_t\right)^n \cdot \sum_i \left(d-\frac{d}{d_i} \right)
\\ & = \left( -K_{Y/S}-\Delta_Y - \sum_i \frac{d}{d_i}(d_i -1) F_i \right)^{n+1} 
\\ & = \left(-\sigma^*\left( K_{X/T}+\Delta \right)\right)^{n+1} 
\\ & = d (-K_{X/T}-\Delta)^{n+1} 
\\ & = d (-K_X -\Delta + f^* K_T)^{n+1}
\\ &  = d (-K_X-\Delta)^{n+1} - 2 d(n+1) \left(-K_{X_t}-\Delta_t\right)^n  
\\ & \geq -2 d(n+1) \left(-K_{X_t}-\Delta_t \right)^n \,.
\end{align*}
As $(X_t,\Delta_t)$ and $(Y_t,(\Delta_Y)_t)$ are isomorphic for generic $t$, we conclude that
\begin{equation*}
 \sum_i \left(1-\frac{1}{d_i} \right) \leq 2 \,.
\end{equation*}
\end{proof}

\begin{example}\label{Ex:sharp}
This example shows that \autoref{P:fibers} is sharp, and the condition $(-K_X)^{n+1}\geq 0$ is necessary. Let $C$ be a genus $g$ hyperelliptic curve, and $\iota$ the hyperelliplict involution. Let $X$ be the quotient of $\bP^1\times C$ by  $G= \bZ/ 2 \bZ$, which acts on $C$ by $\iota$ and on $\bP^1$ as the standard involution. Consider the morphism $f\colon X\to C/\iota \cong \bP^1=:T$. This map is a $\bP^1$-bundle, so that the generic fiber is K-semistable, and it has $2g+2$ non-reduced fibers of multiplicity $2$. The condition $(-K_X)^2\geq 0$ is fulfilled if and only if $g\leq 1$.

\end{example}

\bibliographystyle{skalpha}
\bibliography{includeNice}

\end{document}